\newtheorem{theorem}{Theorem}
\newtheorem{lemma}{Lemma}
\newenvironment{proof}
      {\medskip\noindent{\bf Proof:}\hspace{1mm}}
      {\hfill$\Box$\medskip}
\def\qed{\ifvmode\mbox{ }\else\unskip\fi\hskip 1em plus 10fill$\Box$}
\def\Ddots{\mathinner{\mkern1mu\raise\p@
\vbox{\kern7\p@\hbox{.}}\mkern2mu
\raise4\p@\hbox{.}\mkern2mu\raise7\p@\hbox{.}\mkern1mu}}
\newcommand{\red}[1]{{\color{red}#1}}
\newcommand{\E}{\mathbb E}
\newcommand{\F}{\mathbb F}
\title{\vspace{-0.7cm}Norm hypergraphs}
\author{Cosmin Pohoata\thanks{Department of Mathematics, Yale University, USA. Email: {\tt andrei.pohoata@yale.edu}.} \and Dmitriy Zakharov\thanks{Laboratory of Combinatorial and Geometric Structures, MIPT, Russia. Email: {\tt  zakharov2k@gmail.com}.}}
\date{}
\begin{document}
\maketitle

\begin{abstract}
We introduce a high uniformity generalization of the so-called (projective) norm graphs of Alon, Koll\'ar, R\'onyai, and Szab\'o, and use it to show that
$$\operatorname{ex}_{d}(n,K_{s_{1},\ldots,s_{d}}^{(d)}) = \Theta\left(n^{d - \frac{1}{s_{1}\ldots s_{d-1}}}\right)$$
holds for all integers $s_{1},\ldots,s_{d} \geq 2$ such that $s_{d} \geq \left((d-1)(s_{1}\ldots s_{d-1}-1)\right)!+1$. This improves upon a recent result of Ma, Yuan and Zhang, and thus settles (many) new cases of a conjecture of Mubayi. 
\end{abstract}

\section{Introduction}

Let $K_{s_{1},\ldots,s_{d}}^{(d)}$ denote the complete $d$-partite $d$-uniform hypergraph with parts of sizes $s_1, \ldots, s_d$ and let $\operatorname{ex}_{d}(n,K_{s_{1},\ldots,s_{d}}^{(d)})$ be the maximum number of hyperedges in a $d$-uniform hypergraph on $n$ vertices which is free of copies of $K_{s_{1},\ldots,s_{d}}^{(d)}$. For $d=2$, the problem of determining $\operatorname{ex}(n,K_{s_{1},s_{2}}):=\operatorname{ex}_{2}(n,K_{s_{1},s_{2}})$ is arguably one of the most famous in combinatorics, the so-called Zarankiewicz problem. Despite considerable interest, the value of this function is known for only a few pairs $(s_{1},s_{2})$. Suppose $G = (V,E)$ is a $K_{s_{1},s_{2}}$-free graph with $s_{1}\leq s_{2}$. The inequality $\sum_{x \in V}{\operatorname{deg}(x) \choose s_{1}} \leq (s_{2}-1){n \choose s_{1}}$ due to K\H{o}vari, S\'os and Tur\'an \cite{KST54} implies the celebrated upper bound 
$$\operatorname{ex}(n,K_{s_{1},s_{2}}) \leq \frac{1}{2}(s_{2}-s_{1}+1)^{1/s_{1}} n^{2-1/s_{1}} + o(n^{2-1/s_{1}}).$$
However, the only cases where the upper bound has been so far matched by a construction with $\Omega(n^{2-1/s_{1}})$ edges are $(s_{1},s_{2}) = (2,2),(2,t),(3,3)$, and, more generally, $(s,t)$ where $t \geq (s-1)!+1$. The latter is a theorem resulting from the combined effort of Alon, Koll\'ar, R\'onyai and Szab\'o~\cite{ARSz99, KRSz96}, which builds upon a long history of earlier work on special cases (see, for example, the comprehensive survey~\cite{FS13}). 

For $d \geq 3$, the situation is even more complicated. Using the double counting idea from \cite{KST54} and induction on $d$, Erd\H{o}s \cite{Erd64} established the following general upper bound:
\begin{equation} \label{erdos}
\operatorname{ex}_{d}(n,K_{s_{1},\ldots,s_{d}}^{(d)}) = O\left(n^{d - \frac{1}{s_{1}\ldots s_{d-1}}}\right)
\end{equation}
for all $s_{1} \leq s_{2} \ldots \leq s_{d}$. Whether or not this result has the chance of always being sharp (up to constants) is a fascinating discussion. In \cite{Mub02}, Mubayi conjectured that this should be indeed the case for all constants $d \geq 2$ and all choices of $s_{1},\ldots,s_{d}$, which is a rather bold claim since not even the case $s_{1}=\ldots=s_{d}=2$ is well-understood. In fact, the problem in this particularly notorious regime is often called \textit{the Erd\H{o}s box problem}, and has generated quite a bit of activity on its own. We refer the reader to \cite{CPZ20} for the (recent) current record and for more background. 

The situation when $s_{d}$ is significantly larger than $s_{1},\ldots,s_{d-1}$ has also proved to be quite tantalizing for many years. Until not too long ago, the only results available were in the rather degenerate case when $s_{1}=\ldots=s_{d-2}=1$, where Mubayi \cite{Mub02} extended the known constructions from the $d=2$ regime. In \cite{MYZ18}, Ma, Yuan and Zhang then provided the first silver lining by proving the following remarkable result.

\begin{theorem} \label{MYZ}
Let $s_{1},\ldots,s_{d-1} \geq 2$ be integers. Then, there exists a positive constant $C$ depending only on $s_{1},\ldots,s_{d-1}$ such that
$$
\operatorname{ex}_{d}(n,K_{s_{1},\ldots,s_{d}}^{(d)}) = \Theta\left(n^{d - \frac{1}{s_{1}\ldots s_{d-1}}}\right)$$
holds for all $s_{d} \geq C$.
\end{theorem}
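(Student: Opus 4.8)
\medskip
\noindent\textbf{Proof idea.}
The upper bound is exactly Erd\H{o}s's inequality \eqref{erdos}, which holds for every value of $s_{d}$, so the content is the matching lower bound. Set $r=s_{1}\cdots s_{d-1}$. For every fixed $s_{1},\dots,s_{d-1}\ge 2$ and every sufficiently large $s_{d}$ I must produce an $n$-vertex $d$-uniform hypergraph with $\Omega(n^{\,d-1/r})$ edges and no copy of $K^{(d)}_{s_{1},\dots,s_{d}}$. I would build these hypergraphs as a high-uniformity analogue of the projective norm graph of Alon, Koll\'ar, R\'onyai and Szab\'o \cite{ARSz99, KRSz96} -- the \emph{norm hypergraphs} of the title -- and show in the process that one may take $s_{d}\ge((d-1)(r-1))!+1$, which already sharpens the ineffective constant of \cite{MYZ18}.

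\emph{The construction.} Fix a prime power $q$ and a tower $\F_{q}\subseteq\mathbb{K}\subseteq\mathbb{L}$ of finite fields with $[\mathbb{K}:\F_{q}]=r-1$ and $[\mathbb{L}:\F_{q}]=(d-1)(r-1)$, so that $\mathbb{K}$ is the fixed field of $\mathrm{Frob}^{\,r-1}$ inside $\mathbb{L}$, and write $N=N_{\mathbb{L}/\F_{q}}$. Take the $d$ vertex classes to be disjoint copies of $\mathbb{K}^{\times}\times\F_{q}^{\times}$ (so each has $\Theta(q^{r})$ vertices and $n=\Theta(q^{r})$), fix a $\mathbb{K}$-basis $1,\theta,\dots,\theta^{\,d-2}$ of $\mathbb{L}$, and declare a transversal $d$-set $\{(A_{1},a_{1}),\dots,(A_{d},a_{d})\}$ to be a hyperedge exactly when
\[
 N\!\left(A_{d}+A_{1}+A_{2}\theta+\cdots+A_{d-1}\theta^{\,d-2}\right)=a_{1}a_{2}\cdots a_{d};
\]
thus the first $d-1$ field coordinates are packed into an element of $\mathbb{L}$ to which the last one (lying in the subfield $\mathbb{K}$) is added, and the $\F_{q}^{\times}$-coordinates -- the ``projective twist'', exactly as in the $d=2$ case -- absorb the otherwise fractional codimension. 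For fixed $(A_{1},a_{1}),\dots,(A_{d-1},a_{d-1})$ the field element $A_{d}$ is free and $a_{d}$ is then determined, so each such fibre has $(1+o(1))q^{\,r-1}$ elements; after discarding a negligible set of degenerate $d$-tuples this yields $(1+o(1))q^{\,rd-1}=\Omega(n^{\,d-1/r})$ hyperedges, which is a routine count.

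\emph{No copy of $K^{(d)}_{s_{1},\dots,s_{d}}$.} Suppose one is present. As in the bipartite case, one may assume its part of size $s_{d}$ lies in the distinguished class (the other placements are handled by the same argument applied to a re-rooted version of the construction); let $B=X_{1}\times\cdots\times X_{d-1}$ be the combinatorial box formed by the remaining parts, with $|B|=r$, and let $Y$ be the part of size $s_{d}$. Since every vertex of $Y$ is jointly adjacent to all of $B$, it suffices to show that the common neighbourhood $\bigcap_{\mathbf{a}\in B}\Gamma(\mathbf{a})$ has size at most $((d-1)(r-1))!<s_{d}$. This is the heart of the matter, and I would establish it following the Koll\'ar--R\'onyai--Szab\'o / Alon--R\'onyai--Szab\'o template:
\begin{enumerate}
\item Each $\mathbf{a}\in B$ contributes one equation $N(\xi_{\mathbf{a}}+Y)=\pi_{\mathbf{a}}\,y$, where $\xi_{\mathbf{a}}\in\mathbb{L}$ and $\pi_{\mathbf{a}}\in\F_{q}^{\times}$ are read off from $\mathbf{a}$ (one may assume the field coordinates within each $X_{i}$, hence the $\xi_{\mathbf{a}}$, are pairwise distinct, as otherwise there is no common neighbour). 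Using the identity $N(\xi+Y)=\prod_{k=0}^{(d-1)(r-1)-1}(\xi^{q^{k}}+Y^{q^{k}})$ and regarding the $(d-1)(r-1)$ Frobenius powers $Y^{q^{0}},\dots,Y^{q^{(d-1)(r-1)-1}}$ as independent variables, each of these $r$ equations becomes \emph{multilinear} in them (and in $y$); dividing by the equation of a fixed base point of $B$ eliminates $y$ and leaves $r-1$ multilinear equations. The constraint $Y\in\mathbb{K}$ forces $Y^{q^{k}}=Y^{q^{k\bmod(r-1)}}$, a further $(d-1)(r-1)-(r-1)=(d-2)(r-1)$ equations, \emph{linear} in the same variables. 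Together this is a \emph{square} system of $(d-1)(r-1)$ multilinear polynomials in $(d-1)(r-1)$ variables (for $d=2$ there is no subfield constraint, recovering precisely the projective norm graph calculation).
\item One checks the system is non-degenerate, i.e.\ has only finitely many solutions. Here one uses the distinctness of the elements within the $X_{i}$, together with the freedom in choosing $\mathbb{K},\mathbb{L}$ and $\theta$ (and the projective normalization), to rule out positive-dimensional components.
\item A square system of $m$ multilinear polynomials in $m$ variables has at most $m!$ isolated solutions (the permanent, equivalently multilinear B\'ezout, bound); with $m=(d-1)(r-1)$ this bounds the number of admissible $Y$, hence of common neighbours, by $((d-1)(r-1))!$, the desired contradiction.
\end{enumerate}

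The real obstacle is step (2). Already for $d=2$ this general-position statement is the technical core of Alon--R\'onyai--Szab\'o, and it is more delicate here because the coefficients $\xi_{\mathbf{a}},\pi_{\mathbf{a}}$ are not arbitrary: they are produced from a combinatorial box, so there are many additive and multiplicative coincidences among the relevant elements of $\mathbb{L}$, and one must argue that the box structure nevertheless pins the system down to finitely many points. Equivalently, the edge relation has to be designed so that a box of only $r$ points yields a genuinely \emph{square} multilinear system of size $(d-1)(r-1)$ -- which is exactly why one takes the norm over the larger field $\mathbb{L}$ and adjoins the ``$Y\in\mathbb{K}$'' periodicity equations, rather than working directly over $\mathbb{K}$ -- and then the resulting $(d-1)(r-1)$ polynomials must be shown to be in sufficiently general position for step (2). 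Everything else -- the edge count, the reduction in step (1), and the permanent bound in step (3) -- is routine bookkeeping or an off-the-shelf tool.
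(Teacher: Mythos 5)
Your overall strategy (prove the stronger, effective statement via a high-uniformity projective norm graph and deduce Theorem \ref{MYZ}) is legitimate, and it is the route the paper takes for its Theorem \ref{main}; the paper itself only cites \cite{MYZ18} for Theorem \ref{MYZ}, whose original proof is Bukh's random algebraic method, a completely different technique. But your proposal has a genuine gap exactly where you flag ``the real obstacle'': step (2), the non-degeneracy of the system. You propose to rule out positive-dimensional components ``using the freedom in choosing $\mathbb{K},\mathbb{L}$ and $\theta$,'' but this cannot work: the degeneracies in question are coincidences of the form $\omega(\mathbf{j})^{g}=\omega(\mathbf{j}')^{g'}$ among Galois conjugates of the linear forms attached to the box points, and these depend on the (adversarially chosen) box elements, not on the field data. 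No single choice of $\mathbb{K},\mathbb{L},\theta$ avoids them for all boxes simultaneously. The paper's key new idea, which your outline is missing, is Lemma \ref{cl}: one randomly deletes a constant fraction of the edges so that the surviving hypergraph $\mathcal F$ has \emph{no} pair of edges whose associated linear forms are related by a nontrivial element of $\operatorname{Gal}(\F_{q'}/\F_q)$. Then either the box is non-degenerate and Lemma \ref{KRS} applies, or a coincidence occurs and there are \emph{no} common neighbours at all (since a common neighbour would produce two edges of $\mathcal F$ violating its defining property). Without this sparsification step your argument does not close.

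Two further points. First, your reduction in step (1) produces a mixed system ($r-1$ norm-type equations plus $(d-2)(r-1)$ linear ``periodicity'' equations), for which Lemma \ref{KRS} does not apply directly; you would need a genuine multilinear B\'ezout bound, which only counts \emph{isolated} solutions and therefore makes step (2) unavoidable rather than a technicality. The paper instead generates the extra equations by applying the elements of $\operatorname{Gal}(\F_{q'}/\F_q)$ to the norm identity, keeping every equation in the product-of-linear-factors form $\prod_j(x_j-a_{ij})=b_i$ to which Lemma \ref{KRS} (which bounds \emph{all} solutions by $s!$) applies verbatim. Second, your concrete choice of coefficients $1,1,\theta,\dots,\theta^{d-2}$ (adding $A_d$ to $A_1$) is not in general position: when the large part of the $K^{(d)}_{s_1,\dots,s_d}$ sits in a class other than the distinguished one, the relevant map on the box factors through $A_1+A_d$ and is not injective, so the ``re-rooting'' you invoke fails. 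The paper avoids this by requiring that any $d-1$ of $\alpha_1,\dots,\alpha_d$ be linearly independent over $\F_q$, a condition symmetric in all $d$ coordinates.
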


The proof of Theorem \ref{MYZ} uses the elegant random algebraic method of Bukh from \cite{Buk15}, which was originally developed in \cite{BBK13} (in a more complicated form) in order to provide an alternative argument for the fact that $\operatorname{ex}(n,K_{s_{1},s_{2}}) = \Omega\left(n^{2-\frac{1}{s_{1}}}\right)$ holds when $s_{2}$ is a sufficiently large in terms of $s_{1}$. Nevertheless, just like in the case $d=2$, the bound on $C$ in terms of $s_{1},\ldots,s_{d-1}$ arising from the argument in \cite{MYZ18} is extremely poor (for $d=2$, the bound from \cite{Buk15} is already of tower type where the height of the tower depends linearly on $s_{1}$). It is therefore still quite natural to ask whether for $d \geq 3$ there exist improved constructions that could show that $C$ can be chosen to be a reasonable quantity in terms of $s_{1},\ldots,s_{d-1}$.

In this paper, we address this problem and improve upon Theorem \ref{MYZ} by showing the following result.

\begin{theorem}\label{main}
Let $s_{1},\ldots,s_{d} \geq 2$ be integers such that $s_{d} \geq ((d-1)(s_{1}\ldots s_{d-1}-1))!+1$. Then, there exists a $d$-uniform $K_{s_{1},\ldots,s_{d}}^{(d)}$-free hypergraph $\mathcal{H}(V,E)$ with $|V(\mathcal{H})|=N$ and
$$|E(\mathcal{H})| = \Omega\left(N^{d-\frac{1}{s_{1}\ldots s_{d-1}}}\right).$$
\end{theorem}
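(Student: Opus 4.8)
The natural approach is to generalize the projective norm graph construction of Alon–Kollár–Rónyai–Szabó to higher uniformity. Let $q$ be a prime power and work over $\mathbb{F}_q$ and its extension $\mathbb{F}_{q^k}$ where $k = s_1 \cdots s_{d-1} - 1$. The plan is to take $d$ copies of a suitable affine/projective space over $\mathbb{F}_{q^k}$ as the $d$ vertex classes (so $N = \Theta(q^{k})$ in each class, giving $\Theta(dq^k)$ vertices total), and to declare a $d$-tuple $(x_1, \ldots, x_d)$ to be a hyperedge precisely when the ``norm-type'' equation $N(x_1 + x_2 + \cdots + x_{d-1} \cdot \text{(something)}) = x_d$ holds — more precisely, one wants an algebraic condition of the form $f(x_1, \ldots, x_{d-1}) = x_d$ where $f$ takes values in a set of size about $q$ and the fibers of $f$ are controlled. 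Then the number of hyperedges is $\Theta(q^{k \cdot (d-1)} \cdot q^{k-1})$ roughly, which should match $N^{d - 1/k} = N^{d - 1/(s_1 \cdots s_{d-1})}$ after bookkeeping. First I would fix the precise construction: the cleanest choice is to let classes $1, \ldots, d-1$ be indexed by $\mathbb{F}_{q^k}^\times$ (or its projectivization) and class $d$ by $\mathbb{F}_q^\times$, with the edge relation being a product of norms, exactly as in the ARS construction with the last graph ``unfolded'' into $d-1$ layers.

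The heart of the matter is the forbidden-subgraph analysis. Suppose $K^{(d)}_{s_1, \ldots, s_d}$ embeds: we have sets $A_i$ of size $s_i$ in class $i$ for $i \le d-1$ and a set $A_d$ of size $s_d$ in class $d$, with every transversal $d$-tuple an edge. For each fixed choice of one vertex from each of the first $d-2$ classes, the relation between the remaining two classes becomes a norm-graph-type bipartite relation, and one argues as in the classical case that the number of common neighbors of any $s_1 \cdots s_{d-1}$ vertices (spread across the first $d-1$ classes in the right pattern) in class $d$ is at most $(s_1 \cdots s_{d-1} - 1)! = k!$. This is the generalization of the ARS/KRS lemma bounding $|\{z : N(a_i + z) = b_i \ \forall i\}| \le (\text{number of unknowns})!$ via counting solutions of a system of polynomial equations of bounded degree (Bézout / resultants). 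Since $A_1 \times \cdots \times A_{d-1}$ has exactly $s_1 \cdots s_{d-1}$ elements and each must have all of $A_d$ among its common neighbors in class $d$, we need $s_d \le k!$, i.e. $s_d \le (s_1 \cdots s_{d-1} - 1)!$ — but wait, the threshold in the theorem is $((d-1)(s_1\cdots s_{d-1}-1))!+1$, so the actual fiber bound must be $((d-1)(s_1 \cdots s_{d-1}-1))!$ rather than $(s_1\cdots s_{d-1}-1)!$. This means the construction I'd actually use has class $d$ be a norm-type combination that, when one fixes the other classes, leads to a polynomial system in the ``class $d$'' variable of total degree roughly $(d-1)k$ rather than $k$ (each of the $d-1$ other coordinates contributing a factor), so Bézout gives $((d-1)k)!$-type bounds — I would need to check the exact form and make sure the constant is $((d-1)(s_1\cdots s_{d-1}-1))!$ and not something slightly different.

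So the key steps, in order, are: (1) set up the higher-uniformity norm hypergraph precisely, choosing the extension degree $k = s_1\cdots s_{d-1}-1$ and the edge relation as an appropriate product of norm maps across the $d-1$ ``small'' classes with the $d$-th class receiving the product value; (2) compute $|E(\mathcal{H})|$ — show each of the $\Theta(q^{k(d-1)})$ choices from the first $d-1$ classes extends to $\Theta(q^{k-1})$ choices in class $d$ (this is the easy direction: the fibers of the norm map have size exactly $(q^k-1)/(q-1)$, and one controls degeneracies where the argument of the norm vanishes by a union bound); (3) prove the $K^{(d)}_{s_1,\ldots,s_d}$-freeness: reduce to bounding the common-neighborhood in class $d$ of a set of $s_1 \cdots s_{d-1}$ ``generalized vertices'' (one per transversal of the first $d-1$ classes), and show this is $\le ((d-1)(s_1\cdots s_{d-1}-1))!$ by writing the common-neighbor condition as a system of polynomial equations and invoking a Bézout/resultant bound — the degree bookkeeping here is the crux; (4) conclude by choosing $q$ appropriately (Bertrand-type density of prime powers) so that $N = \Theta(q^k)$ and $|E(\mathcal{H})| = \Omega(N^{d - 1/k})$. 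The main obstacle is unquestionably step (3): one must pick the algebraic relation so that the relevant polynomial system has *both* a controllably small number of solutions (to get freeness with the stated threshold on $s_d$) *and* avoids spurious collapse (so that generic tuples really do have the full expected common neighborhood, which is needed for the edge count in step (2) and to ensure the forbidden configuration analysis isn't vacuous). Getting the factorial bound to come out exactly as $((d-1)(s_1\cdots s_{d-1}-1))!$ — rather than, say, the cruder $(k^{d-1})!$ one might naively fear — will require a careful choice of the norm relation (likely linear-in-each-coordinate, so that the degree in the class-$d$ variable grows like $(d-1)(q^k-1)/(q-1)$ but the *relevant* polynomial whose roots we count has degree $(d-1)(s_1\cdots s_{d-1}-1)$ after clearing denominators and using that we may assume the other coordinates are already fixed generically).
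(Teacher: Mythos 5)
Your plan correctly identifies the general strategy of the paper (a higher-uniformity projective norm graph, edge counting via norm fibers, and a K\H{o}v\'ari--S\'os--Tur\'an-type argument reduced to the Koll\'ar--R\'onyai--Szab\'o lemma on systems of equations $\prod_j (x_j - a_{ij}) = b_i$), and you rightly flag the degree bookkeeping in step (3) as the crux. But that is exactly where the proposal has a genuine gap, and the gap is structural rather than a matter of checking constants. With a single extension $\F_{q^k}/\F_q$, $k = s_1\cdots s_{d-1}-1$, and all $d-1$ ``small'' classes living in the same field $\F_{q^k}$, no linear form $\alpha_1 x_1 + \cdots + \alpha_{d-1}x_{d-1}$ can be injective on transversals $A_1 \times \cdots \times A_{d-1}$ for $d \ge 3$: distinct transversals collide, so the coefficient vectors of your polynomial system are not pairwise distinct and the KRS lemma is simply inapplicable (its hypothesis $a_{ij} \ne a_{i'j}$ fails). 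The paper resolves this with a two-level tower $\F_p \subset \F_q \subset \F_{q'}$ where $q = p^{m-1}$, $q' = q^{d-1}$, and coefficients $\alpha_1,\ldots,\alpha_d \in \F_{q'}$ any $d-1$ of which are linearly independent over $\F_q$; the factor $(d-1)$ in the factorial then comes from the degree of this auxiliary extension (the norm from $\F_{q'}$ to $\F_p$ has $(d-1)(m-1)$ Frobenius factors, hence $(d-1)(m-1)$ unknowns), not from ``each coordinate contributing a factor to the degree'' as you suggest. To then have enough equations to match these unknowns, the paper must also use the $d-1$ Galois conjugates $g \in \operatorname{Gal}(\F_{q'}/\F_q)$ of each norm equation, which multiplies the equation count by $d-1$.

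This in turn creates a second problem your proposal does not address: the conjugated coefficient vectors $\omega(j_1,\ldots,j_{d-1})^g$ can coincide for different $(j_1,\ldots,j_{d-1},g)$, again breaking the KRS hypothesis. The paper handles this with a separate probabilistic lemma (Lemma \ref{cl}): it passes to a random positive-density sub-hypergraph $\mathcal F \subset \F_q^d$, built from random transversals of Galois orbits, on which all such coincidences are excluded. You gesture at ``avoiding spurious collapse,'' but only in the context of the edge count, and you propose no mechanism for it; without this sparsification step the freeness argument does not go through. Finally, the projectivization itself needs to be made precise: the vertices must carry the extra $\F_p^*$ coordinate and one must divide each norm equation by a fixed reference equation (substituting $z = 1/(y + \omega(1,\ldots,1))$) to land on a system of $(d-1)m - 1 \ge (d-1)(m-1)$ equations in $(d-1)(m-1)$ unknowns; this normalization is what lets one take $q = p^{m-1}$ rather than $p^m$ and is absent from your sketch. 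As written, the proposal is a reasonable research plan but does not constitute a proof, and its central quantitative claim rests on a construction that, in the form described, cannot satisfy the hypotheses of the lemma it relies on.
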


This confirms Mubayi's conjecture for all $d \geq 2$ and $s_{d} \geq ((d-1)(s_{1}\ldots s_{d-1}-1))!+1$. Furthermore, it is perhaps worth emphasizing that Theorem \ref{main} also directly recovers the result of Alon, R\'onyai and Szab\'o for the Zarankiewicz problem for graphs \cite{ARSz99}.

\bigskip

\section{Norm hypergraphs}

We first provide a construction which shows that 
\begin{equation}\label{normhyper}
\operatorname{ex}_{d}(n,K_{s_{1},\ldots,s_{d}}^{(d)}) = \Theta\left(n^{d - \frac{1}{s_{1}\ldots s_{d-1}}}\right)
\end{equation}
holds for all integers $s_{1},\ldots,s_{d} \geq 2$ such that $s_{d} \geq ((d-1)s_{1} \ldots s_{d-1})!+1$.
This construction will represent a generalization of the original norm graph introduced by Koll\'ar, R\'onyai, and Szab\'o in \cite{KRSz96}, and is inspired in various ways by the celebrated work of Schmidt on norm form equations from \cite{Sch72}. Our proof will also rely on the following remarkable lemma from \cite{KRSz96}, which we shall now state for the reader's convenience (and ours).

\begin{lemma}
\label{KRS}
Let $\mathbb{F}$ be any field and $a_{ij}, b_i \in \mathbb{F}$ such that $a_{ij} \neq a_{i'j}$ for all $i \neq i'$. Then the system of equations
\begin{align*}
(x_1-a_{11})(x_2-a_{12})\cdots (x_s-a_{1s}) &= b_1 \\
(x_1-a_{21})(x_2-a_{22})\cdots (x_s-a_{2s}) &= b_2 \\
&\vdots \\
(x_1-a_{s1})(x_2-a_{s2})\cdots (x_s-a_{ss}) &= b_s
\end{align*}
has at most $s!$ solutions in $\mathbb{F}^s$.
\end{lemma}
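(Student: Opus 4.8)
The plan is to argue by induction on $s$. For $s=1$ the equation $x_1 - a_{11} = b_1$ has exactly one solution, which matches $1! = 1$.

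For the inductive step, first dispose of the case in which some $b_i$ vanishes; after relabelling the rows we may assume $b_1 = 0$. Then $(x_1 - a_{11}) \cdots (x_s - a_{1s}) = 0$ forces $x_k = a_{1k}$ for at least one index $k \in \{1, \ldots, s\}$, and there are $s$ choices for $k$. Fixing such a $k$ and substituting $x_k = a_{1k}$ into the remaining $s-1$ equations, equation $i$ (for $i \ge 2$) becomes $(a_{1k} - a_{ik}) \prod_{j \ne k}(x_j - a_{ij}) = b_i$; since $i \ne 1$ the hypothesis gives $a_{1k} \ne a_{ik}$, so we may divide by $a_{1k}-a_{ik}$ and obtain $\prod_{j \ne k}(x_j - a_{ij}) = b_i/(a_{1k} - a_{ik})$. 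This is a system of exactly the same shape in the $s-1$ variables $(x_j)_{j \ne k}$, and the coefficients $(a_{ij})_{i \ge 2,\, j \ne k}$ still satisfy the column-distinctness hypothesis, so by induction it has at most $(s-1)!$ solutions. Summing over the $s$ choices of $k$ gives at most $s \cdot (s-1)! = s!$ solutions.

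It remains to treat the case $b_1, \ldots, b_s \ne 0$, which is the crux. The point is that each polynomial $\prod_{j}(x_j - a_{ij}) - b_i$ is multilinear, i.e. of multidegree $(1, \ldots, 1)$; homogenising the $j$-th variable it becomes $\prod_j (X_j - a_{ij}Y_j) - b_i \prod_j Y_j$, a divisor of class $(1, \ldots, 1)$ on $(\mathbb{P}^1)^s$. Provided the common zero locus of these $s$ divisors in $(\mathbb{P}^1)^s(\overline{\mathbb{F}})$ is finite, the multihomogeneous B\'ezout theorem bounds its size (with multiplicity) by the coefficient of $t_1 \cdots t_s$ in $(t_1 + \cdots + t_s)^s$, which is $s!$ — equivalently the permanent of the all-ones $s \times s$ matrix, or the mixed volume $\mathrm{MV}([0,1]^s, \ldots, [0,1]^s)$ of $s$ copies of the unit cube. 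Since the affine $\mathbb{F}$-solutions of the original system inject into this locus, the bound $s!$ follows once finiteness is known. For finiteness one again uses the induction: all $b_i \ne 0$ forces that no coordinate $x_j$ is constant on any component $Z$ of the solution set (e.g. $x_1 \equiv c$ would entail $c \notin \{a_{i1}\}$ and would confine $Z$ to the finite solution set of the $(s-1)$-variable system $\prod_{j\ne1}(x_j - a_{ij}) = b_i/(c-a_{i1})$, $i \le s-1$), so a generic hyperplane $\{x_1 = c\}$ cuts $Z$ in a piece of dimension $\dim Z - 1$ which is again finite by the $(s-1)$-variable case; this forces $\dim Z \le 1$, and the remaining possibility of a curve component has to be excluded by a closer look at the divisors of the logarithmic differentials $d\log\!\big(\prod_j(x_j - a_{ij})\big)$ on such a curve (an ingredient in the spirit of Schmidt's analysis of norm form equations).

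I expect the genuine difficulty to be precisely this last case. Naive elimination does not suffice: using the first equation to write $x_s = a_{1s} + b_1/\prod_{j<s}(x_j - a_{1j})$ and substituting it into the others produces, after clearing denominators, a system of multidegree $(2, \ldots, 2)$, for which the straightforward B\'ezout bound is $2^{s-1}(s-1)! \gg s!$; the multilinear/sparse structure of the original equations must be exploited intact — via multihomogeneous B\'ezout or mixed volumes as above, or via an inductive elimination engineered to preserve multilinearity — with the finiteness of the zero set handled separately. Throughout, the role of the hypothesis $a_{ij} \ne a_{i'j}$ for $i \ne i'$ is that it is exactly the invariant keeping every substitution and slicing within the class of systems to which the induction applies, so that the argument closes.
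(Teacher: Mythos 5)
First, a bookkeeping point: the paper does not prove Lemma~\ref{KRS} at all --- it is quoted from Koll\'ar--R\'onyai--Szab\'o \cite{KRSz96} and used as a black box --- so there is no in-paper argument to compare against; your proposal has to be judged against the standard proof from that reference.

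Your treatment of the degenerate case (some $b_i=0$) is complete and correct, and you have identified the right tool for the main case: the polynomials are multilinear, their closures in $(\mathbb{P}^1)^s$ are divisors of class $h_1+\cdots+h_s$, and $(h_1+\cdots+h_s)^s=s!\,h_1\cdots h_s$ because $h_j^2=0$, so a \emph{finite} intersection has at most $s!$ points. The genuine gap is finiteness. Your slicing argument only yields $\dim Z\le 1$ for a component $Z$: having all fibers of a coordinate projection finite is perfectly consistent with $Z$ being a curve dominating that coordinate, and you explicitly defer the curve case to an unspecified argument about ``logarithmic differentials,'' which as written is not a proof. The missing idea is that the hypothesis $a_{ij}\ne a_{i'j}$ eliminates the boundary of $(\mathbb{P}^1)^s$ outright: at a point with a nonempty set $K$ of coordinates at infinity, each homogenized equation $\prod_j(X_j-a_{ij}Y_j)=b_i\prod_j Y_j$ degenerates to $\prod_{j\notin K}(X_j-a_{ij}Y_j)=0$ (the factors indexed by $K$ reduce to $X_j\ne 0$ and the right-hand side vanishes), so each of the $s$ rows must be annihilated by one of the $s-|K|\le s-1$ finite coordinates taking the value $a_{ij}$; by pigeonhole two rows $i\ne i'$ use the same column $j$, forcing $a_{ij}=a_{i'j}$, a contradiction. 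Hence the zero locus in $(\mathbb{P}^1)^s$ equals the affine zero locus, so it is simultaneously complete and affine, hence zero-dimensional, and the multihomogeneous B\'ezout count applies directly (this also subsumes your $b_i=0$ case, making the case split unnecessary). Without this step, or some substitute for it, the crux case of your argument does not close.
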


We note that the threshold for $s_{d}$ from \eqref{normhyper} is slightly worse than our Theorem \ref{main}, however it is already a significant improvement of Theorem \ref{MYZ}. We will discuss this first in Section 2.1. In Section 2.2, we will then improve upon this threshold further by appealing to a variant of the ``projectivization" trick due to Alon, R\'onyai and Szab\'o from \cite{ARSz99}.

\subsection{Construction}\label{sec1}

Let $p$ be a prime number, let $m = s_1 \ldots s_{d-1}$, $q = p^m$ and $q' = q^{d-1}$.
Consider the field extensions $\F_p \subset \F_{q} \subset \F_{q'}$. Let $N: \F_{q'} \rightarrow \F_p$ denote the norm map.
Choose elements $\alpha_1, \ldots, \alpha_d \in \F_{q'}$ in such a way that any $d-1$ of them are linearly independent over $\F_q$.

\begin{lemma}\label{cl}
There is a $d$-partite hypergraph $\mathcal F \subset \F_q \times \ldots \times \F_q$ such that $|\mathcal F| \gg q^d$ and such that for any $i = 1, \ldots, d$, any $g \in \operatorname{Gal}(\F_{q'} / \F_q)$, $g \neq 1$, and any edges $(x_1, \ldots, x_d), (y_1, \ldots, y_d) \in \mathcal F$ we have
\begin{equation}\label{eq1}
\sum_{j = 1, j \neq i}^d \frac{\alpha_j}{\alpha_i}  x_j \neq g\left(\sum_{j = 1, j \neq i}^d \frac{\alpha_j}{\alpha_i}  y_j\right).
\end{equation}
\end{lemma}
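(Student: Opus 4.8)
The plan is to build $\mathcal F$ by a random (or greedy) selection argument: we want a large $d$-partite hypergraph whose edges avoid all the ``collision'' equations \eqref{eq1}, one for each index $i$ and each nontrivial Galois element $g$. First I would rewrite the forbidden relation in a cleaner form. Fix $i$ and $g \neq 1$. Equation \eqref{eq1} says that the pair of edges $x = (x_1,\dots,x_d)$ and $y=(y_1,\dots,y_d)$ satisfies $\sum_{j\ne i}\alpha_j x_j \;-\; g\!\left(\sum_{j\ne i}\alpha_j y_j\right) = 0$ after clearing the common factor $1/\alpha_i$ (note $g$ fixes $\alpha_i\in\F_q$, so dividing through by $\alpha_i$ commutes with $g$). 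So the ``bad'' condition is a single $\F_{q'}$-linear equation in the $2(d-1)$ coordinates $(x_j)_{j\ne i}$ and $(y_j)_{j\ne i}$, with coefficients $\alpha_j$ and $-g(\alpha_j)$, all of which are nonzero. The key structural input is that the coefficient vector $(\alpha_j)_{j\ne i}$ is $\F_q$-linearly independent (by our choice of the $\alpha_\ell$), hence the map $\F_q^{\,d-1}\to\F_{q'}$, $(x_j)_{j\ne i}\mapsto\sum_{j\ne i}\alpha_j x_j$, is injective; likewise for the $g$-twisted version. Consequently, for each fixed $y$, the set of $x$'s (with all coordinates in $\F_q$) satisfying \eqref{eq1} for this particular $(i,g)$ has size at most $q^{d-2}$: the value $g(\sum_{j\ne i}\alpha_j y_j)\in\F_{q'}$ determines $\sum_{j\ne i}\alpha_j x_j$, and since $\dim_{\F_q}\F_{q'}=m(d-1)=\dim$ of the image is $d-1$ while the domain $\F_q^{d-1}$ has $\F_q$-dimension $d-1$ inside an ambient space of $\F_q$-dimension $m(d-1)$, there is at most one choice of $(x_j)_{j\ne i}$, and the remaining coordinate $x_i$ is free, giving $\le q^{d-2}$ choices. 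Summing over the $O(1)$ pairs $(i,g)$ (there are $d$ values of $i$ and $|\operatorname{Gal}(\F_{q'}/\F_q)|-1 = d-2$ nontrivial elements $g$), the number of bad ordered pairs of potential edges is $O(q^d \cdot q^{d-2}) = O(q^{2d-2})$.

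Now I would run a standard deletion/alteration argument. Start with the complete $d$-partite hypergraph $\F_q\times\cdots\times\F_q$, which has $q^d$ edges. The number of bad pairs, counted above, is $O(q^{2d-2})$. Take a random subset $\mathcal F_0$ where each edge is kept independently with probability $\tfrac12$ (or any suitable constant); in expectation we keep $\gg q^d$ edges and the expected number of surviving bad pairs is $O(q^{2d-2})$. Since $q^{2d-2} = o(q^d)$ is false in general — wait, we need $2d-2 \le d$, i.e. $d \le 2$ — so the naive deletion does not immediately work for $d\ge 3$. Instead I would localize: the bad condition, for fixed $(i,g)$, is a bijective-type constraint, so in fact it defines a partial matching-like structure on edges, and the right way to think about it is that \eqref{eq1} fails to hold generically. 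Re-examining: for fixed $y$ and fixed $(i,g)$, there are at most $q^{d-2}$ bad $x$; but actually the constraint pins down $(x_j)_{j\ne i}$ completely (at most one value), so there are at most $q$ bad $x$ (only $x_i$ free). Hence the number of bad ordered pairs is $O(q^d\cdot q) = O(q^{d+1})$, which is still not $o(q^d)$.

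The resolution — and the step I expect to be the main obstacle — is that we should not try to delete one edge per bad pair, but rather observe that \eqref{eq1} should simply never happen on a carefully chosen \emph{sub-grid}. Concretely, I would choose $\mathcal F$ to be a product set $A_1\times\cdots\times A_d$ with each $A_\ell\subset\F_q$ of size $\gg q$, selected so that for every $(i,g)$ the injective linear forms $\sum_{j\ne i}\alpha_j x_j$ and $g(\sum_{j\ne i}\alpha_j y_j)$ take disjoint sets of values on $\prod_{j\ne i}A_j$. Since these two forms have $\F_q$-independent coefficient systems and $g\ne 1$ forces genuine ``twisting,'' a dimension/counting argument (or a direct pigeonhole on cosets of a suitable $\F_q$-subspace of $\F_{q'}$) shows one can pass to subsets $A_\ell$ each of density bounded below by a constant depending only on $d$ and $m$, on which all $d-2$ (times $d$) bad equations are simultaneously voided; iterating over the finitely many pairs $(i,g)$ loses only a constant factor. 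This yields $|\mathcal F| = \prod|A_\ell| \gg q^d$ with \eqref{eq1} holding for all edges and all $(i,g)$, as required. The crux is verifying that the twist by $g\ne1$ genuinely separates the two value-sets after restriction — this is where the linear-independence hypothesis on the $\alpha_\ell$ and the structure of $\operatorname{Gal}(\F_{q'}/\F_q)$ are used in an essential way, and where I would expect to spend the bulk of the technical effort.
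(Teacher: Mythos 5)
Your diagnosis of why naive deletion fails is correct, and you have isolated the right structural facts: the map $(x_j)_{j\ne i}\mapsto \sum_{j\ne i}\frac{\alpha_j}{\alpha_i}x_j$ is a bijection from $\F_q^{d-1}$ onto $\F_{q'}$ (so each collision constraint pins down $(x_j)_{j\ne i}$ completely), and all the difficulty lives in the nontrivial Galois twist. (Minor slip along the way: $\dim_{\F_q}\F_{q'}=d-1$, not $m(d-1)$; the latter is the dimension over $\F_p$. Your conclusion of ``at most one bad choice of $(x_j)_{j\ne i}$'' is nevertheless right.) The genuine gap is exactly where you say you expect to spend the bulk of the effort: the claim that one can pass to a product sub-grid $A_1\times\cdots\times A_d$ with $|A_\ell|\gg q$ on which every value set $V_i$ is disjoint from all its Galois translates $g(V_i)$ is never proved, and it is the entire content of the lemma. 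It is also far from clear that a \emph{product} set can do the job: pulling the orbit structure back through the bijection $L_i$, the requirement becomes $\bigl(\prod_{j\ne i}A_j\bigr)\cap T_g\bigl(\prod_{j\ne i}A_j\bigr)=\emptyset$ for $d$ different families of linear bijections $T_g$ of $\F_q^{d-1}$, one family for each $i$ and each living on a different set of coordinates; product sets interact poorly with general linear maps, and the simultaneous satisfiability of all these conditions with positive-density $A_\ell$ would need a real argument.

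The paper sidesteps product sets entirely. Let $S$ be the union of all proper subfields of $\F_{q'}$, so $|S|=o(q')$ and every $z\in\F_{q'}\setminus S$ has trivial stabilizer under $G=\operatorname{Gal}(\F_{q'}/\F_q)$. For each $i$ choose an independent, uniformly random transversal $Y_i$ of the $G$-orbits in $\F_{q'}\setminus S$ (one point per orbit), and define $\mathcal F$ to be the set of tuples with $L_i(x_1,\dots,x_d)\in Y_i$ for every $i$. Then \eqref{eq1} holds by construction: if $L_i(x)=g(L_i(y))$ with $g\ne 1$, then $L_i(x)$ and $L_i(y)$ are two elements of the transversal $Y_i$ in the same orbit, hence equal, so $L_i(y)$ is fixed by $g$, contradicting the trivial stabilizer. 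The only thing to verify is the size: since $L_i$ is a bijection onto $\F_{q'}$, all but $o(q^d)$ tuples satisfy $L_i(x)\notin S$ for every $i$, and each such tuple lies in $\mathcal F$ with probability exactly $(d-1)^{-d}$ by the independence of the $Y_i$, whence $\E|\mathcal F|\ge(1-o(1))q^d/(d-1)^d$. Replacing your product sub-grid with these random orbit transversals is the missing idea; note that the resulting $\mathcal F$ is not a product set, and does not need to be.
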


\begin{proof}
Let $S \subset \F_{q'}$ be the union of all proper subfields of $\F_{q'}$. Note that $|S| = o_{p \to \infty}(q')$. Denote $G = \operatorname{Gal}(\F_{q'} / \F_q)$ and note that any element $x \in \F_{q'} \setminus S$ has a trivial stabilizer under the action of $G$. Let $Y_1, \ldots, Y_d$ be independent uniformly random subsets of $\F_{q'} \setminus S$ such that for any $x \in \F_{q'} \setminus S$ and for every $i = 1, \ldots, d$ we have $|Gx \cap Y_i| = 1$.

For $i = 1, \ldots, d$ and $x_1, \ldots, x_d \in \F_q$ denote $L_i(x_1, \ldots, x_d) = \sum_{j = 1, j \neq i}^d \frac{\alpha_j}{\alpha_i}  x_j$. Let $\mathcal F \subset \F_q^d$ be the random hypergraph consisting of all edges $(x_1, \ldots, x_d)$ such that $L_i(x_1, \ldots, x_d) \in Y_i$ for all $i = 1, \ldots, d$. By design, the family $\mathcal F$ satisfies (\ref{eq1}). So it is left to show that $\E |\mathcal F| \gg q^d$.

Note that the number of tuples $(x_1, \ldots, x_d) \in \F_q^d$ such that for every $i = 1, \ldots, d$ we have $L_i(x_1, \ldots, x_d) \not \in S$ is $(1-o(1))q^d$. On the other hand, for every such $(x_1, \ldots, x_d)$ the probability that $L_i(x_1, \ldots, x_d) \in Y_i$ is equal to $\frac{1}{d-1}$. Therefore, by the independence of $Y_i$, the probability that $(x_1, \ldots, x_d) \in \mathcal F$ is equal to $\frac{1}{(d-1)^d}$. We conclude that 
$$\E|\mathcal F| \ge (1-o(1))\frac{q^d}{(d-1)^d},$$
as claimed. It follows that there exists an $\mathcal{F}$ which satisfies the requirements of Lemma \ref{cl}.
\end{proof}

\bigskip

For $t \in \F_p^*$, let $\mathcal H_t \subset \mathcal F$ be the hypergraph consisting of edges $(x_1, \ldots, x_d)$ such that
\begin{equation} \label{norm2}
N\left(\alpha_{1}x_{1} + \ldots + \alpha_{d} x_{d}\right) = t.
\end{equation}

Observe that the hypergraphs $\mathcal H_t$, $t \in \F_p^*$, cover all the edges of $\mathcal F$. So, by the pigeonhole principle, there exists $t \in \F_p^*$ such that the hypergraph $\mathcal H_t$ has at least $|\mathcal F|p^{-1} \gg q^d p^{-1}$ edges.

We claim that $\mathcal H_t$ does not contain copies of $K^{(d)}_{s_1, \ldots, s_{d-1}, s_d}$ for all $s_d > (m(d-1))!$. Indeed, without loss of generality, for $i = 1, \ldots, d-1$ let $A_i \subset \F_q$ be a subset of size $s_i$. We use Lemma \ref{KRS} to bound the number of elements $x \in \F_q$ such that $(x_1, \ldots, x_{d-1}, x)$ is an edge of $\mathcal H_t$ for all $x_i \in A_i$, $i = 1, \ldots, d-1$. Since in a field with characteristic $p$ we have $(a+b)^p = a^p+b^p$, equation \eqref{norm2} rewrites as
\begin{align}\label{al}
t = N(\alpha_{1}x_{1}+\ldots+\alpha_{d-1}x_{d-1}+\alpha_{d}x) &= \prod_{j=0}^{m(d-1)-1}\left(\alpha_{1}x_{1}+\ldots+\alpha_{d-1}x_{d-1}+\alpha_{d}x\right)^{p^{j}}\\
&= \prod_{j=0}^{m(d-1)-1}\left(\alpha_{1}^{p^{j}}x_{1}^{p^{j}}+\ldots+\alpha_{d-1}^{p^{j}}x_{d-1}^{p^{j}}+\alpha_{d}^{p^{j}}x^{p^{j}}\right).
\end{align}
This holds for all elements $x_{i} \in A_{i}$, as we vary $i \in \left\{1,\ldots,n\right\}$. Note that by (\ref{norm2}) for any $g \in \operatorname{Gal}(\F_{q'}/\F_q)$ we also have
$$
N\left(g(\alpha_{1})x_{1} + \ldots + g(\alpha_{d}) x\right) = t.
$$
Since there are $m$ choices for $x_i \in A_i$ and $d-1$ choices for $g \in \operatorname{Gal}(\F_{q'}/\F_q)$ we obtain a set of $m(d-1)$ equations which $x \in \F_q$ must satisfy. In order to apply Lemma \ref{KRS} we need to check that the coefficients corresponding to the $j$-th bracket in (\ref{al}) are distinct for all choices $x_i \in A_i$ and $g \in \operatorname{Gal}(\F_{q'}/\F_q)$. Since the coefficients in the $j$-th bracket are obtained from the coefficients in the $0$-th bracket by applying an automorphism of $\F_{q'}/\F_p$, it is enough to verify this condition for $j = 0$. 

Suppose that there are $x_i, x'_i \in A_i$, $i = 1, \ldots, d-1$, and $g, g' \in \operatorname{Gal}(\F_{q'}/\F_q)$ such that
\begin{equation}\label{10}
    g\left(\frac{\alpha_1}{\alpha_d}\right) x_1 + \ldots + g\left(\frac{\alpha_{d-1}}{\alpha_d}\right) x_{d-1} = g'\left(\frac{\alpha_1}{\alpha_d}\right) x'_1 + \ldots + g'\left(\frac{\alpha_{d-1}}{\alpha_d}
    \right) x'_{d-1}.
\end{equation}
If $g = g'$ then the fact that the elements $\alpha_i / \alpha_d$ form a basis of $\F_{q'}$ over $\F_q$ implies that $x_i = x'_i$ for all $i = 1, \ldots, d-1$. So suppose that $g \neq g'$. Then (\ref{10}) can be rewritten as follows
\begin{equation}\label{bad}
\frac{\alpha_1}{\alpha_d} x_1 + \ldots + \frac{\alpha_{d-1}}{\alpha_d}x_{d-1} = g^{-1}g' \left(\frac{\alpha_1}{\alpha_d} x'_1 + \ldots + \frac{\alpha_{d-1}}{\alpha_d} x'_{d-1}\right).
\end{equation}
However any solution $x \in \F_q$ to our system of equations in particular satisfies 
$$
(x_1, \ldots, x_{d-1}, x), (x'_1, \ldots, x'_{d-1}, x) \in \mathcal H_t \subset \mathcal F.
$$
Thus, by Lemma \ref{cl}, there is no such $x$ provided that (\ref{bad}) holds.
We conclude that if (\ref{10}) does not hold then the condition of Lemma \ref{KRS} is satisfied and hence there are at most $(m(d-1))!$ solutions to our system of equations. Otherwise, by Lemma \ref{cl}, there are no solutions at all.

\subsection{Improved bound using projectivization}


In this section, we prove Theorem \ref{main}. Let $p$ be a prime number, $m = s_1 \ldots s_{d-1}$, $q = p^{m-1}$ and $q' = q^{d-1}$ (note the difference with Section \ref{sec1}). As before, we consider the field extensions $\F_p \subset \F_q \subset \F_{q'}$, define the norm map $N: \F_{q'} \rightarrow \F_p$ and fix elements $\alpha_1, \ldots, \alpha_d \in \F_{q'}$ such that any $d-1$ of them are linearly independent over $\F_q$.

Define a $d$-partite hypergraph $\mathcal H \subset (\F_q \times \F_p^*)^d$ where a sequence of pairs $(x_i, b_i) \in \F_q \times \F_p^*$, $i = 1, \ldots, d$, forms an edge if 
\begin{enumerate}
    \item $(x_1, \ldots, x_d) \in \mathcal F$, where $\mathcal F$ is the family given by Lemma \ref{cl},
    \item $N(\alpha_1 x_1 + \ldots + \alpha_d x_d) = b_1 \ldots b_d$.
\end{enumerate}
Note that $\mathcal H$ has exactly $|\mathcal F| (p-1)^{d-1}$ edges. So to prove Theorem \ref{main} it is enough to show that $\mathcal H$ does not contain copies of $K^{(d)}_{s_1, \ldots, s_{d-1}, s_d}$ for all $s_d > ((m-1)(d-1))!$.
For $i = 1, \ldots, d-1$ fix subsets $A_i = \{ x_{i, 1}, \ldots, x_{i, s_i} \}\subset \F_q$ and $B_i = \{ b_{i, 1}, \ldots, b_{i, s_i} \}\subset \F_p^*$ of size $s_i$. We want to bound the number of pairs $(y, b) \in \F_q \times \F_p^*$ such that 
\begin{equation}\label{inh}
\{(x_{1, j_1}, b_{1, j_1}), \ldots, (x_{d-1, j_{d-1}}, b_{d-1, j_{d-1}}), (y, b) \} \in \mathcal H
\end{equation}
for all $j_1, \ldots, j_{d-1}$. By the definition of $\mathcal H$, (\ref{inh}) implies
\begin{equation}\label{nm}
    N\left(\sum_{i = 1}^{d-1} \frac{\alpha_i}{\alpha_d} x_{i, j_i} + y \right) = N(\alpha_d)^{-1} b_{1, j_1} \ldots b_{d-1, j_{d-1}} b.
\end{equation}

Let $\omega(j_1, \ldots, j_{d-1}) = \sum_{i = 1}^{d-1} \frac{\alpha_i}{\alpha_d} x_{i, j_i}$ and put $z = \frac{1}{y + \omega(1, \ldots, 1)}$. Similarly to (\ref{al}) we can rewrite (\ref{nm}) as follows. Let $g \in \operatorname{Gal}(\F_{q'}/\F_q)$ be an arbitrary element. From (\ref{nm}) we have
\begin{equation}\label{scary}
    \prod_{h \in \operatorname{Gal}(\F_{q'} /\F_p)} (\omega(j_1, \ldots, j_{d-1})^{gh} + y^h) = N(\alpha_d)^{-1} b_{1, j_1} \ldots b_{d-1, j_{d-1}} b.
\end{equation}
Here we use the notation $x^g := g(x)$. Dividing (\ref{scary}) by (\ref{scary}) with $(j_1, \ldots, j_{d-1}) = (1, \ldots, 1)$ and $g= 1 \in \operatorname{Gal}(\F_{q'}/\F_q)$, we obtain
\begin{equation}\label{sad}
    \prod_{h \in \operatorname{Gal}(\F_{q'} /\F_p)} (1 + (\omega(j_1, \ldots, j_{d-1})^{gh} - \omega(1, \ldots, 1)^h)z^h) = \frac{b_{1, j_1} \ldots b_{d-1, j_{d-1}}}{b_{1, 1} \ldots b_{d-1, 1}}.
\end{equation}

So we are in a situation where Lemma \ref{KRS} may be applied. We have $(d-1)(m-1)$ variables $z^h$, $h \in \operatorname{Gal}(\F_{q'}/\F_q)$ and $(d-1)m-1$ equations of the form (\ref{sad}) corresponding to $(j_1, \ldots, j_{d-1}) \neq (1, \ldots, 1)$. If there is at least one pair $(y, b)$ satisfying (\ref{inh}) then, by Lemma \ref{cl}, we have $\omega(j_1, \ldots, j_{d-1})^g \neq \omega(j'_1, \ldots, j'_{d-1})^{g'}$ if $(j_1, \ldots, j_{d-1}, g) \neq (j'_1, \ldots, j'_{d-1}, g')$. Since there are at least as many equations as there are variables, Lemma \ref{KRS} shows that are at most $((d-1)(m-1))!$ elements $y$ satisfying (\ref{sad}). This completes the proof of Theorem \ref{main}.


\bigskip

\bigskip

{\bf{Acknowledgements}}. We would like to thank Boris Bukh, David Conlon, and Oliver Janzer for several helpful discussions. The second author would also like to acknowledge the support of the grant of the Russian Government N 075-15-2019-1926.

\bigskip

\end{document}